\documentclass[12pt]{amsart}
\usepackage{amssymb,latexsym, amsmath, amsxtra}
\usepackage{hyperref}
\hypersetup{
    colorlinks=true,
}
\usepackage{graphicx}
 \textwidth=6.5in
 \hoffset=-.75in

\newtheorem{theorem}{Theorem}
\newtheorem{lemma}{Lemma}

\newtheorem{conjecture}{Conjecture}
\newtheorem{proposition}{Proposition}
\newtheorem{remark}{Remark}

\parskip=.2cm
\parindent=.5cm

\begin{document}

\title{Which numbers are not the sum plus the product of three positive integers?}
\author{Brian Conrey \& Neil Shah}
\address{American Institute of Mathematics, San Jose, CA 95112, USA}
\email{conrey@aimath.org}
\address{Morgan Hill, CA 95037, USA}
\email{neilhemang@gmail.com}
\thanks{This work is partially supported by an NSF grant.}
\date{\today}

\maketitle

\begin{abstract}
    We investigate the number $R_3(n)$ of representations as the sum plus the product of three positive integers. On average, $R_3(n)$ is $\frac{1}{2}\log^2 n$. We give an upper bound for $R_3(n)$ and an upper bound for the number of $n \leq N$ such that $R_3(n) = 0$. We conjecture that $R_3(n)= 0$ infinitely often.
\end{abstract}

\section{Introduction}

It is well-known that the number of representations of an integer as a product of $k$ positive integers, denoted $\tau_k(n)$, satisfies $\tau_k(n) \ll_{\epsilon} n^\epsilon$ for any $\epsilon>0$ as $n$ goes to infinity (see Section \ref{sec:not} for notation). The proof of this standard result relies crucially on the fact that $\tau_k$ is a multiplicative function. For $k=3$ this amounts to counting integral solutions of
$$n=xyz.$$
The slightest perturbation of this problem turns it from an exercise to a seriously hard research question. One such example is the problem of trying to estimate the number of representations of a number $n$ as the sum plus the product of three positive integers, i.e.
\begin{equation} \label{eqn:splusp} n=xyz+x+y+z.\end{equation}

Let $R_3(n)$ denote the number of solutions to  (\ref{eqn:splusp}) in positive integers $x, y, z$. Here are the values of $R_3(n)$ for $1 \leq n \leq 12$:
\begin{eqnarray*}
0, 0, 0, 1, 0, 3, 0, 3, 3, 3, 0, 9
\end{eqnarray*}
Our motivating problem was the following conjecture:
\begin{conjecture}
\label{con:r3bound}
For any $\epsilon >0$ there is a $C(\epsilon)>0$ such that
$$R_3(n)\le C(\epsilon) n^\epsilon$$
for all positive integers $n$.
\end{conjecture}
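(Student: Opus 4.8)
The plan is to fix one variable and turn the remaining two-variable problem into a divisor count. By symmetry we may assume $x$ is the smallest of the three coordinates, so that $x^3 \le xyz \le n$ and hence $x \le n^{1/3}$; each ordered solution is counted at most $6$ times this way. For a fixed value of $x$, isolating $xyz + y + z = n-x$, multiplying by $x$, and adding $1$ factors the left-hand side as
\[
  (xy+1)(xz+1) = x(n-x)+1 =: M_x .
\]
Thus every solution with smallest coordinate $x$ corresponds to a factorization $M_x = d\,d'$ in which both factors are $\equiv 1 \pmod{x}$; and since $M_x \equiv 1 \pmod{x}$, the single condition $d \equiv 1 \pmod x$ already forces the cofactor $d' = M_x/d$ to satisfy $d' \equiv 1 \pmod x$. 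Writing $y = (d-1)/x$, the number of such $(y,z)$ is therefore at most the number of divisors of $M_x$ lying in the residue class $1 \bmod x$, and so
\[
  R_3(n) \;\le\; 6 \sum_{x \le n^{1/3}} \#\{\, d \mid M_x : d \equiv 1 \pmod{x} \,\}.
\]

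Ignoring the congruence and bounding the inner term by the full divisor count $\tau(M_x)$ already yields something unconditional: since $M_x = x(n-x)+1 \le n^{4/3}$ for $x \le n^{1/3}$, the standard estimate $\tau(m)\ll_\epsilon m^\epsilon$ for the number of divisors gives $\tau(M_x) \ll_\epsilon n^{\epsilon}$ (after relabelling $\epsilon$), and summing over the $n^{1/3}$ admissible $x$ produces
\[
  R_3(n) \;\ll_\epsilon\; n^{1/3+\epsilon}.
\]
This is the estimate I would record first; the whole game is then to improve the exponent $1/3$ down to $0$.

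The decisive—and, I expect, genuinely hard—step is to exploit the congruence $d \equiv 1 \pmod{x}$ that was discarded above. Heuristically only a proportion $\approx 1/x$ of the divisors of $M_x$ should fall in any fixed class coprime to $x$, which would make the inner count $O(1)$ on average and the total of size $\log^2 n$, consistent with the mean value $\tfrac12\log^2 n$ of $R_3(n)$ stated in the abstract. The obstruction is that the divisors of the quadratic $M_x = -x^2+nx+1$ need not be equidistributed modulo $x$: a single exceptional $x \le n^{1/3}$ for which $M_x$ happens to have anomalously many divisors $\equiv 1 \pmod x$ manufactures an $n$ with large $R_3(n)$. Ruling such conspiracies out uniformly amounts to controlling the divisors of a quadratic polynomial in arithmetic progressions to moduli as large as $n^{1/3}$, which appears to lie beyond present-day technology; this is precisely why the $n^\epsilon$ bound remains conjectural while the unconditional exponent stalls at $1/3$.
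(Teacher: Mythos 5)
The statement you were asked to prove is a \emph{conjecture}: the paper itself does not prove it, and the authors say explicitly that they were unable to. Your writeup, to its credit, is honest on the same point, but it must be said plainly: what you have is not a proof of the $n^\epsilon$ bound. The argument you actually complete --- fix the smallest variable $x \le n^{1/3}$, multiply $xyz+y+z = n-x$ by $x$ and add $1$ to get $(xy+1)(xz+1) = x(n-x)+1$, then bound the inner count by the full divisor function $\tau(M_x) \ll_\epsilon n^\epsilon$ and sum over $x$ --- is correct, and it is precisely the paper's proof of its Theorem \ref{thm:repbound}, via the identity (\ref{eqn:factor}) in Section \ref{algorithm}. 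The details check out: $M_x \equiv 1 \pmod x$ does force the cofactor of any divisor $d \equiv 1 \pmod x$ into the same class, and $M_x \le n^{4/3}$ keeps the divisor bound uniform. So you have independently reproduced the paper's unconditional $R_3(n) \ll_\epsilon n^{1/3+\epsilon}$, by the same route.

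The genuine gap is the step you yourself flag: discarding the congruence $d \equiv 1 \pmod x$ costs, heuristically, a factor of $x$ per term, and recovering it would require equidistribution of the divisors of the quadratic $-x^2+nx+1$ in residue classes to moduli as large as $n^{1/3}$ --- uniformly in $n$, with no exceptional $x$. No argument in your proposal (or in the paper) controls this; your ``only a proportion $1/x$ of divisors land in the class'' is a heuristic about averages, and the conjecture demands a pointwise bound, so a single conspiratorial $x$ defeats it. Note also that even the paper's conditional improvement, assuming its Conjecture \ref{conj:sum} (an Erd\H{o}s--Shiu type bound on $\sum \tau_k(f(N,n))$ over short ranges), only sharpens the exponent to $n^{1/3}\log n$: the averaging there is over $x$, not over the residue classes, so it attacks a different inefficiency and still leaves the $1/3$ intact. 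In short: the unconditional part of your proposal is correct and matches the paper; the claimed target remains open, and the missing idea is exactly the one you identified but did not supply.
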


While we were unable to prove this conjecture, we were able to find and prove a number of other interesting results as well as formulate some new conjectures.

\subsection{Notation}
\label{sec:not}

We use some notation that is standard in analytic number theory but perhaps not standard to everybody. In this section, we define this notation for the reader.

Suppose $f(n)$ and $g(n)$ are functions defined on the positive integers. Then, $f(n) = O(g(n))$ means that there exists some $M > 0$ and $N_0$ (which could depend on $M$) such that $|f(n)| \leq Mg(n)$ for all $n \geq N_0$. Also, $f(n) = \Omega(g(n))$ means that there exists some $M > 0$ and a sequence $n_1 < n_2 < \ldots$ such that $f(n_j) \geq Mg(n_j)$ for all $j$. Additionally, $f(n) \ll g(n)$ is equivalent to $f(n) = O(g(n))$ and $f(n) \gg g(n)$ is equivalent to $g(n) \ll f(n)$. Note that $\ll_a$, $\gg_a$, and $\Omega_a$ indicate that the implicit constant $M$ is dependent on $a$.

Lastly, suppose that $f(n)$ and $g(n)$ tend to $\infty$ with $n$. Then, $f(n) \sim g(n)$ means that $\lim_{n \to \infty} \frac{f(n)}{g(n)} = 1$.

\section{Statement of Results}

We can prove Conjecture \ref{con:r3bound} on average.
\begin{theorem}
As $N$ goes to infinity,
$$\frac{1}{N}\sum_{n\leq N}R_3(n) \sim \frac{1}{2}\log^2N.$$
\end{theorem}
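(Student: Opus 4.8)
The plan is to convert the value-count into a lattice-point count and then sandwich it between two copies of the ordinary $d_3$ summatory function. Observe first that
\[
\sum_{n \le N} R_3(n) = \#\{(x,y,z) \in \mathbb{Z}_{>0}^3 : xyz + x + y + z \le N\},
\]
since the left-hand side simply tallies every ordered triple whose value $xyz+x+y+z$ is at most $N$. The governing heuristic is that the linear term $x+y+z$ is negligible beside the product $xyz$, so the count ought to be asymptotic to $D_3(N) := \sum_{m\le N} d_3(m) = \#\{(x,y,z): xyz \le N\}$. Here I would use the classical Piltz estimate $D_3(N) \sim \tfrac12 N\log^2 N$, which follows elementarily from $D_2(M) = M\log M + O(M)$ via $D_3(N)=\sum_{x\le N}D_2(N/x)$ (or by a pole-of-order-three residue computation for $\zeta(s)^3$). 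Dividing by $N$ would then produce $\tfrac12\log^2 N$.

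For the upper bound I would simply drop the linear term: since $x,y,z\ge 1$ every admissible triple satisfies $xyz \le N-3 \le N$, so the count is at most $D_3(N)\sim \tfrac12 N\log^2 N$, giving $\tfrac1N\sum_{n\le N}R_3(n)\le \tfrac12\log^2 N\,(1+o(1))$. For the lower bound I would fix a small $\delta>0$ and restrict attention to triples with $\max(x,y,z)\le N^{1-\delta}$, for which $x+y+z\le 3N^{1-\delta}$; such a triple is automatically admissible as soon as $xyz \le N - 3N^{1-\delta}$. Hence the count is at least
\[
D_3\!\left(N - 3N^{1-\delta}\right) - B, \qquad B := \#\{(x,y,z): xyz \le N,\ \max(x,y,z) > N^{1-\delta}\}.
\]
Because $N - 3N^{1-\delta}\sim N$, the first term is $\sim \tfrac12 N\log^2 N$, so everything hinges on showing that the "large-variable" defect $B$ is negligible.

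The estimate for $B$ is the step I expect to be the main obstacle, since a careless bound returns an error of the same order $N\log^2 N$ as the main term. The point is that $B$ must be shown to be $O(\delta^2 N\log^2 N)$, with the crucial $\delta^2$ saving. By symmetry $B \le 3\sum_{N^{1-\delta}<x\le N}D_2(N/x)$, and inserting $D_2(M)\ll M(\log M + 1)$ reduces matters to the integral
\[
\int_{N^{1-\delta}}^{N} \frac{N}{t}\,\log\frac{N}{t}\,dt = \tfrac12\,\delta^2\,N\log^2 N ,
\]
which delivers $B = O(\delta^2 N\log^2 N)$. Combining this with the lower-bound inequality yields $\tfrac1N\sum_{n\le N}R_3(n)\ge \bigl(\tfrac12 - O(\delta^2)\bigr)\log^2 N\,(1+o(1))$.

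Finally I would close the argument by letting $\delta$ tend to $0$ after $N$. For every fixed $\delta>0$ the two bounds give
\[
\tfrac12 - O(\delta^2) \ \le\ \liminf_{N\to\infty}\frac{\sum_{n\le N}R_3(n)}{N\log^2 N} \ \le\ \limsup_{N\to\infty}\frac{\sum_{n\le N}R_3(n)}{N\log^2 N} \ \le\ \tfrac12 ,
\]
and sending $\delta\to 0^+$ forces both the $\liminf$ and the $\limsup$ to equal $\tfrac12$, which is exactly the asserted asymptotic $\tfrac1N\sum_{n\le N}R_3(n)\sim \tfrac12\log^2 N$.
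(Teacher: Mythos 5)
Your proof is correct, and while it shares the paper's overall skeleton---convert $\sum_{n\le N}R_3(n)$ into the lattice-point count $\#\{(x,y,z):xyz+x+y+z\le N\}$ and reduce to the Piltz asymptotic $D_3(M):=\sum_{m\le M}d_3(m)\sim\tfrac12 M\log^2M$---it handles the crucial error analysis by a genuinely different decomposition. The paper cuts on the size of the \emph{value}: for triples with $x+y+z\gg N/\log N$ and $xyz<N$, the largest variable must be $\gg N/\log N$, which forces $xy\ll\log N$, so that region contributes $\ll\sum_{xy\ll\log N}N/(xy)\ll N(\log\log N)^2$; on the complementary range $x+y+z\ll N/\log N$ the constraint becomes $xyz\le N+O(N/\log N)$, and the increment of $D_3$ over a window of length $N/\log N$ is only $O(N\log N)$, so the asymptotic falls out in one pass with an effective error term. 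You instead cut on the size of the largest \emph{coordinate} at $N^{1-\delta}$, prove the defect $B\ll\delta^2N\log^2N$ via $B\le3\sum_{N^{1-\delta}<x\le N}D_2(N/x)$ (your integral evaluation giving exactly $\tfrac{\delta^2}{2}N\log^2N$ is right; the secondary contributions $O(\delta N\log N)$ from the $O(M)$ term in $D_2(M)=M\log M+O(M)$ and $O(N^{\delta}\log N)$ from sum-versus-integral, justified since $t\mapsto(N/t)\log(N/t)$ is decreasing, are negligible but deserve a line), and then let $\delta\to0$ after $N\to\infty$. What each approach buys: yours is self-contained and arguably cleaner---it needs nothing beyond the elementary hyperbola estimate for $D_2$ and avoids the paper's somewhat delicate inference $xy\ll\log N$---but with $\delta$ held fixed the $\liminf$/$\limsup$ sandwich yields only the qualitative asymptotic with no rate, whereas the paper's single cut at $N/\log N$ produces an explicit $O(N\log N)$ error beneath the main term $\tfrac12N\log^2N$. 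You could recover effectivity within your framework by letting $\delta=\delta(N)\to0$ with $\delta\log N\to\infty$, e.g.\ $\delta=(\log N)^{-1/2}$, which makes $3N^{1-\delta}=o(N)$ still hold and turns your defect bound into an error of size $O(N\log N)$, matching the paper.
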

For an upper bound, we can show the following.
\begin{theorem}
\label{thm:repbound}
For any $\epsilon > 0$,
$$R_3(n)\ll_{\epsilon} n^{1/3}\log n(\log\log n)^4.$$
\end{theorem}
In the course of proving an upper bound for $R_3(n)$, we discovered some interesting properties of $R_3(n)$ which we state here.

\begin{proposition}
\label{prop:prime}
If $R_3(n)=0$ for $n > 2$, then $n$ is prime and $2n-3$ is prime.
\end{proposition}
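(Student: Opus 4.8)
The plan is to prove the contrapositive: if $n$ is composite, then $R_3(n) \geq 1$, so that $R_3(n) = 0$ forces $n$ to be prime. The key observation is that specializing one of the three variables collapses the cubic equation (\ref{eqn:splusp}) into a pure factorization condition. Concretely, I would set $z = 1$, so that $xyz + x + y + z$ becomes $xy + x + y + 1 = (x+1)(y+1)$. Thus every solution of (\ref{eqn:splusp}) with $z = 1$ corresponds precisely to a factorization $n = (x+1)(y+1)$ in which both factors are at least $2$.

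Next I would invoke compositeness directly. If $n$ is composite, write $n = ab$ with integers $a, b \geq 2$. Setting $x = a - 1$, $y = b - 1$, and $z = 1$ gives a triple of positive integers, and a one-line check confirms $xyz + x + y + z = (a-1)(b-1) + (a-1) + (b-1) + 1 = ab = n$. Hence $R_3(n) \geq 1$, contradicting $R_3(n) = 0$. Reading this contrapositively yields the proposition.

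Since the argument is entirely elementary, there is no analytic obstacle; the only real content is recognizing the specialization $z = 1$ that converts the sum-plus-product into the product $(x+1)(y+1)$, after which compositeness does all the work. The single point deserving a word of care is the boundary behavior: the minimum of $xyz + x + y + z$ over positive integers is $4$, so $R_3(n) = 0$ holds automatically for $n \leq 3$. Among these, $n = 2$ and $n = 3$ are prime and consistent with the statement, with $n = 1$ understood as a degenerate value lying outside the dichotomy of primes and composites.
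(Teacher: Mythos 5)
Your proof is correct and is essentially identical to the paper's: both specialize $z=1$ to obtain $f_3(x,y,1)=(x+1)(y+1)$ and then realize a composite $n=ab$ via $x=a-1$, $y=b-1$. Your extra remarks on the boundary cases $n\le 3$ and the degenerate value $n=1$ are fine but not needed for the contrapositive argument.
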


\begin{proof}
Let
$$f_3(x,y,z) = xyz + x + y + z.$$
Observe that $f_3(x,y,1) = (x+1)(y+1)$. Thus, if $n$ is composite, there exists a  triple $(x,y,1)$ such that $f_3(x,y,1) = n$. Thus, $n$ is prime. The second assortion is obviously true for $n=3$. If $n > 3$ and $2n-3$ is composite, then
$$2n-3 = (2x+1)(2y+1)$$
for some positive $x, y$. This implies
$$n = 2xy + x + y + 2,$$
which is a solution with $z = 2$, which means that $f_3(x, y, 2) = n$. Thus, $2n-3$ is prime.
\end{proof}
This result led us to wonder how many primes $p \leq x$ there are for which $R_3(p) = 0$.
Define
$$U_3(N)=\#\{n\le N: R_3(n)=0\}.$$
Using the large sieve we can show the following.
\begin{theorem} \label{thm:lsbound} There exists some constant $c > 0$ such that
$$U_3(N)\ll \frac{N}{e^{c\sqrt{\log N}}}$$
as $N$ goes to infinity.
\end{theorem}
We expect that this estimate is far from optimal. We did calculations for $N$ up to the 250 millionth prime ($5336500537$) and found 2014 prime numbers which are not the sum plus the product of three positive integers.

Based on numerical evidence, we do believe that $U_3(N)\to \infty$ as $N\to \infty$ but very slowly, maybe like a power of $\log N$.

\begin{conjecture}
There exist positive constants $A < B$ such that
$$\log^A N \ll U_3(N) \ll \log^B N.$$
\end{conjecture}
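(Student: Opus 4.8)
The plan is to first convert the vanishing of $R_3(p)$ into a statement about divisors of shifted integers. Fixing one of the three variables, say $x\ge 2$, and clearing denominators in $xyz+x+y+z=p$ yields the identity
$$(xy+1)(xz+1)=x(p-x)+1=:M_x,$$
so a representation of the prime $p$ with one entry equal to $x$ exists precisely when $M_x$ has a divisor $d\equiv 1\pmod x$ with $2x+1\le d\le\sqrt{M_x}$ (the complementary factor $M_x/d$ is then automatically $\equiv 1\pmod x$ since $M_x\equiv 1\pmod x$). Consequently $R_3(p)=0$ exactly when no $x$ in $[2,p^{1/3}]$ admits such a ``medium'' divisor. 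A divisor heuristic makes the shape of the problem clear: a typical $M_x\asymp xp$ has about $\log p$ divisors, of which a proportion $\asymp 1/\varphi(x)$ lie in a prescribed residue class, so the expected number of admissible divisors is $\asymp(\log p)/\varphi(x)$. Thus the condition is genuinely restrictive only for $x\ll\log p$, while for larger $x$ the absence of a medium divisor in the class $1\bmod x$ is the generic outcome. This reduction is the common starting point for both inequalities.

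For the upper bound I would try to show that, for all but $\ll\log^B N$ primes $p\le N$, \emph{some} small $x$ already succeeds. The exceptional primes are exactly those for which $M_2,M_3,\dots,M_k$ fail simultaneously, which forces each $M_x=x(p-x)+1$ to be ``rough'', i.e.\ essentially prime (already $x=2$ forces $2p-3$ to be prime or $3$ times a prime, giving $\ll N/\log^2 N$ by the Selberg sieve --- weaker than Theorem \ref{thm:lsbound}, which is why many $x$ are needed). Each such near-primality condition is a one-dimensional sieve condition on a linear form in $p$ and heuristically costs a factor $\asymp 1/\log N$, so an upper-bound sieve applied to the $k$ forms should give a bound of order $N(\log N)^{-k}$ up to constants. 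To reach $\log^B N$ one must take $k\asymp \log N/\log\log N$, and \textbf{this is the main obstacle}: running a sieve whose number of linear forms grows with $N$, keeping the level of distribution and the singular-series factors under uniform control. This is exactly the regime in which estimates of the type in Conjecture \ref{conj:sum}, together with Erd\H{o}s--Ford results on the distribution of divisors in short intervals and residue classes, would have to be pushed to hold uniformly in the number of conditions.

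For the lower bound the difficulty is reversed: one must \emph{manufacture} primes $p$ for which \emph{every} $x$ fails. I would arrange the binding small-$x$ failures by forcing the forms $x(p-x)+1$, for $2\le x\le c\log p$, to be simultaneously prime (or free of medium divisors) via a lower-bound sieve in the spirit of the Goldston--Pintz--Y\i ld\i r\i m / Maynard--Tao machinery or a Chen-type argument, and then control the \emph{accidental} successes coming from large $x$ by a second-moment estimate showing that, within the constructed family, the number of large-$x$ representations is usually zero. Producing enough such $p$ to reach $\gg\log^A N$ requires the construction to carry genuine freedom, e.g.\ a positive-proportion family of admissible residue data to which the lower-bound sieve applies. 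The honest obstruction is that guaranteeing \emph{actual} simultaneous primality of several linear forms is the Hardy--Littlewood $k$-tuple problem, which is open; so a realistic first target is a proof \emph{conditional} on the $k$-tuple conjecture (equivalently on a strong, uniform form of Conjecture \ref{conj:sum}), with the unconditional statement left as the genuinely hard core. Matching the two polylogarithmic exponents $A$ and $B$, rather than merely bracketing $U_3(N)$ between two powers of $\log N$, is what I expect to be out of reach of present techniques.
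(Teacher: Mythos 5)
This statement is one of the paper's \emph{conjectures}: the authors give no proof of either inequality, supporting it only with numerical evidence (the $2014$ primes up to $5336500537$ with $R_3(p)=0$) and, on the proven side, only the far weaker upper bound of Theorem \ref{thm:lsbound}, $U_3(N)\ll N e^{-\sqrt{\log N}}$, obtained from the large sieve via the residue classes counted in (\ref{eqn:fpdfn}). So there is no paper proof to compare against, and your proposal is not a proof either --- it is a research program whose two load-bearing steps are, by your own admission, open problems. Your reduction is correct and matches the paper's identity (\ref{eqn:factor}): $R_3(p)=0$ precisely when, for every $2\le x\le p^{1/3}$, the number $M_x=x(p-x)+1$ has no proper divisor $d\equiv 1\pmod{x}$ in the admissible range. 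But from there, the upper-bound half requires an upper-bound sieve over $k\asymp \log N/\log\log N$ linear forms with uniform control of the level of distribution; no such estimate exists, and standard sieves (Selberg, fundamental lemma) carry losses exponential in the sieve dimension, which for $k$ growing with $N$ obliterate any target of shape $\log^B N$. The lower-bound half requires \emph{all} of the forms $x(p-x)+1$, $2\le x\le c\log p$, to be simultaneously prime, which is a Hardy--Littlewood $k$-tuple statement with $k\to\infty$; note that the GPY/Maynard--Tao machinery you invoke produces \emph{some} primes among a tuple, never all of them, and Chen-type arguments give almost-primes for a bounded number of forms, so the named tools cannot deliver this step even in principle, and conditioning on the $k$-tuple conjecture still demands uniformity in $k$ beyond its usual formulation.

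A quantitative slip is also worth flagging because it undermines the heuristic calibration of $A$ and $B$. The expected number of admissible divisors of $M_x$ is $\asymp(\log p)/x$ rather than $(\log p)/\varphi(x)$: the candidates $d\equiv 1\pmod{x}$, $d\le\sqrt{M_x}$ contribute $\sum_d 1/d\asymp (\log p)/x$ in expectation. Worse, if one then treats the events ``$x$ fails'' as independent, the probability that all $x$ in $[\log p,\,p^{1/3}]$ fail is about $\exp\bigl(-c\log p\sum_{x}1/x\bigr)\approx\exp(-c'\log^2 p)$, which would make $U_3(N)$ bounded --- contradicting both the data and the conjecture's lower bound. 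The conjectured polylogarithmic growth must therefore come from strong correlations in divisor structure (a typical integer has about $(\log p)^{\log 2}$ divisors, not $\log p$, and by Ford's theorems the density of integers with a divisor in a dyadic window decays like a small negative power of the logarithm), so even the heuristic support for $\log^A N\ll U_3(N)\ll\log^B N$ needs more care than your sketch provides. In short: your opening reduction is sound and agrees with Section \ref{algorithm} of the paper, your identification of the obstacles is honest and accurate, but neither inequality is proved, and the statement remains exactly what the paper says it is --- an open conjecture.
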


It is interesting to compare this problem
with the analogous problem in four variables. Let $R_4(n)$ denote the number of positive integral solutions to
$$n =xyzw+x+y+z+w$$
and let $$U_4(N) = \#\{n\le N: R_4(n)=0\}.$$
\begin{conjecture} For any $\epsilon >0$,
$$R_4(n)\ll_\epsilon   n^\epsilon.$$
 \end{conjecture}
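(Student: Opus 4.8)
The plan is to mimic the argument behind Theorem \ref{thm:repbound}, exploiting the full symmetry of the equation to reduce the count to a divisor sum and then to try to beat the trivial size of the summation region. At the cost of a bounded multiplicative factor we may assume $x\le y\le z\le w$, and the constraint $xyzw\le n$ together with $z,w\ge y$ forces $xy^3\le n$, so the two smallest variables range over a region of area $\asymp n^{1/2}$. Fixing $x$ and $y$ and writing $a=xy$, the equation becomes $a\,zw+z+w=n-x-y$; multiplying through by $a$ and completing the product gives the identity
\[
(xyz+1)(xyw+1)=xy(n-x-y)+1 .
\]
For each admissible pair $(x,y)$ the solutions $(z,w)$ are thus in bijection with the divisors $d\mid M(x,y):=xy(n-x-y)+1$ with $d\equiv 1 \pmod{xy}$ lying in the window $xy^2< d\le \sqrt{M(x,y)}$ (the cofactor $M/d\equiv 1\pmod{xy}$ is then automatic, since $M\equiv 1\pmod{xy}$). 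In particular their number is at most $\tau_2(M(x,y))\ll_\epsilon n^\epsilon$, as $M(x,y)\le n^2$.

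Combining this per-pair bound with the $\asymp n^{1/2}$ admissible pairs yields only $R_4(n)\ll_\epsilon n^{1/2+\epsilon}$, which is even weaker than what Theorem \ref{thm:repbound} gives for $R_3$. To reach the conjectured $n^\epsilon$ one must instead show that the restricted divisor sum
\[
\sum_{\substack{x\le y\\ xy^3\le n}}\#\{\,d\mid M(x,y):\ d\equiv 1 \pmod{xy},\ xy^2< d\le \sqrt{M(x,y)}\,\}
\]
is itself $\ll_\epsilon n^\epsilon$. The congruence $d\equiv 1\pmod{xy}$ is the only available source of saving: when $xy$ is large it should force $M(x,y)$ to have no admissible divisor for all but very few pairs, so that only $n^\epsilon$ of the $\asymp n^{1/2}$ pairs contribute at all. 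Making this rigorous is a genuinely two-dimensional shifted-divisor problem — counting divisors of the quadratic form $xy(n-x-y)+1$ in the progression $1\pmod{xy}$, uniformly as $(x,y)$ sweeps the whole region — and goes well beyond Conjecture \ref{conj:sum}, which controls such a divisor sum only along a one-parameter family.

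The decisive obstacle, however, is structural: this conjecture is at least as strong as the motivating Conjecture \ref{con:r3bound}. Indeed, specializing $x=1$ turns the four-variable equation into $n-1=yzw+y+z+w$, so the $4$-tuples with a coordinate equal to $1$ already account for $R_3(n-1)$ solutions, whence $R_4(n)\ge R_3(n-1)$. Consequently any proof of $R_4(n)\ll_\epsilon n^\epsilon$ must in particular establish $R_3(n-1)\ll_\epsilon n^\epsilon$, i.e.\ resolve Conjecture \ref{con:r3bound}. I therefore expect the hard part to be exactly this sub-case: the contribution of the \emph{thin} configurations with $x=1$ is precisely the original three-variable problem and is no easier here. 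A realistic line of attack would be to prove Conjecture \ref{con:r3bound} first, and then to show that the genuinely four-variable contribution ($x\ge 2$) saves an extra power of $n$ through the $\mod xy$ congruence via the divisor-sum estimate above; absent the former, the latter alone cannot yield the full conjecture.
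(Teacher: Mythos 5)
The statement you were asked about is a \emph{conjecture}: the paper offers no proof of it, and you correctly stop short of claiming one, so there is no gap in the sense of a false or missing step --- nothing you assert is wrong. Moreover, the partial content of your proposal coincides exactly with what the paper actually proves: your identity $(xyz+1)(xyw+1)=xy(n-x-y)+1$ is the paper's factorization $(xyz+1)(xyw+1)=nxy+1-x^2y-xy^2$, and combining the per-pair divisor bound $\tau(M(x,y))\ll_\epsilon n^\epsilon$ with the $\asymp n^{1/2}$ admissible pairs $(x,y)$ (from $x\le y$ and $xy^3\le n$) is precisely how the paper proves Theorem \ref{thm:pwbound}, its bound $R_4(n)\ll_\epsilon n^{1/2+\epsilon}$, in the section on the algorithm for $R_4(n)$. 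Your structural observation that the conjecture subsumes Conjecture \ref{con:r3bound} --- since setting $x=1$ gives $R_4(n)\ge R_3(n-1)$, so any bound $R_4(n)\ll_\epsilon n^\epsilon$ would resolve the three-variable problem --- is correct and matches the paper's remark that $f_4(x,y,z,1)=f_3(x,y,z)+1$; it is a fair diagnosis of why no divisor-counting argument currently available (including the paper's Conjecture \ref{conj:sum}, which as you note only controls one-parameter families) can close the gap between $n^{1/2+\epsilon}$ and $n^\epsilon$. In short: your proposal is an honest and accurate account of the state of the problem, recovering the paper's Theorem \ref{thm:pwbound} by the same route the paper takes, and correctly identifying the conjecture as open.
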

Again, we can prove this conjecture on average.
\begin{theorem}
As $N$ goes to infinity,
$$\frac{1}{N}\sum_{n \leq x} R_4(n) \sim \frac{1}{6}\log^3N.$$
\end{theorem}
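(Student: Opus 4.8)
The plan is to recognize the summatory function as a lattice-point count and to compare it against the count for the pure product. Summing over $n$ records each admissible quadruple exactly once, so
$$\sum_{n\le N} R_4(n) = \#\{(x,y,z,w)\in\mathbb{Z}_{>0}^4 : xyzw + x+y+z+w \le N\} =: S(N).$$
The governing idea is that the linear term $x+y+z+w$ is a lower-order perturbation of the product term, so $S(N)$ should share the leading asymptotic of $\#\{(x,y,z,w): xyzw\le N\} = \sum_{m\le N} d_4(m)$. Since $\sum_{m\le N} d_4(m)\sim \frac{N}{3!}\log^3 N = \frac{N}{6}\log^3 N$ by the classical fourfold divisor estimate — one writes $\sum_{xyzw\le N}1 = \sum_{xyz\le N}\lfloor N/(xyz)\rfloor = N\sum_{xyz\le N}(xyz)^{-1} + O(N\log^2 N)$ and invokes $\sum_{xyz\le N}(xyz)^{-1}\sim\frac16\log^3 N$ — dividing by $N$ yields the claimed constant $\frac16$.

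To make the comparison rigorous I would sum over $w$ last. For fixed $x,y,z$ the inequality becomes $w(xyz+1)\le N-(x+y+z)$, giving
$$S(N)=\sum_{\substack{x,y,z\ge 1\\ xyz+x+y+z+1\le N}}\left\lfloor\frac{N-(x+y+z)}{xyz+1}\right\rfloor.$$
Removing the floor costs $O(\#\{xyz\le N\})=O(N\log^2 N)$; replacing $xyz+1$ by $xyz$ and dropping $x+y+z$ from the numerator each cost $O(N)$, since the resulting series are dominated by $N\sum_{x,y,z\ge1}(xyz)^{-2}\ll N$. Thus $S(N) = N\sum_{xyz+x+y+z+1\le N}(xyz)^{-1}+O(N\log^2 N)$, and it remains to show that shrinking the region from $\{xyz\le N\}$ to $\{xyz+x+y+z+1\le N\}$ changes the reciprocal sum by only $o(\log^3 N)$.

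The main obstacle is precisely this boundary estimate, since a crude count of the lattice points in the shell $N-(x+y+z)-1 < xyz \le N$ would lose a whole power of $\log N$; one must instead use that the constraint pins $xyz$ to a short interval. I would split according to the size of $x+y+z$. When $x+y+z$ is small these points satisfy $xyz > N/2$, so their reciprocal sum is at most $\frac{2}{N}\sum_{N/2<m\le N} d_3(m) = O(\log^2 N)$. When $x+y+z$ is large, the largest variable dominates and forces $xyz$ into an interval of length comparable to $x+y+z$ about $N$; summing over the two smaller variables then contributes only $O(1)$. Together these give $S(N)=\frac{N}{6}\log^3 N + O(N\log^2 N)$, which proves the theorem. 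Running the identical argument with three variables and $d_3$ in place of four variables and $d_4$ reproves the first theorem, with leading constant $\frac12$.
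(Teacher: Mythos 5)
Your proposal is correct, but it takes a genuinely different route from the paper's. The paper proves Theorems 1 and 4 by a single truncation device: it shows that tuples whose linear part is large, $x+y+z+w \gg N/\log N$, force the product of the remaining (smaller) variables to be $\ll \log N$, so such tuples contribute only $O(N(\log\log N)^{3})$; consequently $\sum_{n\le N}R_4(n)=\sum_{n< N+O(N/\log N)}d_4(n)$, and the known asymptotic $\sum_{n\le x}d_4(x)\sim \frac{x}{6}\log^3 x$ (Titchmarsh, 12.1.4) finishes the proof. You instead carry out an explicit one-variable (hyperbola-style) summation over $w$, reduce to the reciprocal sum $N\sum_{xyz+x+y+z+1\le N}(xyz)^{-1}$, and control the boundary shell $N-(x+y+z)-1<xyz\le N$ by a two-case split; note that your Case B (where $x+y+z>N/2$ forces $xy\le 5$ and pins $z$ to an interval $(\frac{N-x-y-1}{xy+1},\frac{N}{xy}]$ with $\sum_z z^{-1}=O(1)$) is the exact counterpart of the paper's ``large linear part forces a tiny complementary product'' observation, deployed at threshold $N/2$ instead of $N/\log N$. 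What your route buys: it is more self-contained, needing only $d_3$-level inputs ($\sum_{m\le N}d_3(m)\ll N\log^2 N$ and $\sum_{xyz\le N}(xyz)^{-1}=\frac16\log^3 N+O(\log^2 N)$, the latter by partial summation) rather than quoting the $d_4$ average, and it yields the explicit error term $O(N\log^2 N)$; what the paper's buys is brevity and a proof that transfers verbatim between the three- and four-variable theorems. One small repair in your step dropping $x+y+z$ from the numerator: the claimed domination by $N\sum_{x,y,z}(xyz)^{-2}$ is not valid termwise (take $x=y=1$, $z\asymp N$, where $(x+y+z)/(xyz)\asymp 1$ but $N/(xyz)^2\asymp N^{-1}$); the correct computation writes $\frac{x+y+z}{xyz}=\frac{1}{yz}+\frac{1}{xz}+\frac{1}{xy}$ and sums the free variable over its range of length at most $N/(xy)$, giving a bound $3N\sum_{x,y\ge 1}(xy)^{-2}\ll N$. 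This is cosmetic; the step, and your proof, stand.
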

We can also obtain an upper bound of $R_4(n)$.
\begin{theorem}
\label{thm:pwbound}
For any $\epsilon > 0$,
$$R_4(n) \ll_{\epsilon} n^{\frac{1}{2}+\epsilon}.$$
\end{theorem}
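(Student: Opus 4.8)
The plan is to mimic the proof of Theorem \ref{thm:repbound}, but fixing the two smallest variables rather than the single smallest one. Since $xyzw + x + y + z + w$ is symmetric in its four arguments, I would count only ordered solutions with $x \le y \le z \le w$ and multiply the result by at most $4! = 24$. From $x \le z$ and $y \le w$ one gets $(xy)^2 \le xyzw \le n$, so that $xy \le \sqrt{n}$. This hyperbola constraint on the pair $(x,y)$ is exactly what will produce the exponent $\tfrac12$.

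Next I would fix $x$ and $y$ and solve the defining equation for $w$. Rewriting $n - x - y - z = w(xyz + 1)$ shows that $w = (n-x-y-z)/(xyz+1)$ is completely determined by $z$, so for each fixed pair $(x,y)$ it suffices to count the positive integers $z$ for which $xyz + 1 \mid n - x - y - z$. Here I would use the elementary fact that $\gcd(xyz+1,\,xy) = 1$: multiplying $n - x - y - z$ by $xy$ and using $xyz \equiv -1 \pmod{xyz+1}$ gives $xy(n-x-y-z) \equiv xy(n-x-y) + 1 \pmod{xyz+1}$, so the divisibility condition is equivalent to the cleaner statement $xyz + 1 \mid xy(n-x-y) + 1$. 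Setting $M := xy(n-x-y)+1$, each admissible $z$ then corresponds to a distinct divisor $d = xyz+1$ of $M$ with $d \equiv 1 \pmod{xy}$, so the number of valid $z$ is at most $\tau(M)$.

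Finally I would sum over the allowed pairs. Since $xy \le \sqrt{n}$ and $0 < n - x - y < n$, we have $M \le n^{3/2} + 1$, so the standard divisor bound yields $\tau(M) \ll_{\epsilon} n^{\epsilon}$. The number of pairs $(x,y)$ with $x \le y$ and $xy \le \sqrt{n}$ is $\ll \sqrt{n}\,\log n$ by counting lattice points beneath the hyperbola, whence $R_4(n) \ll \sum_{xy \le \sqrt{n}} \tau(M) \ll_{\epsilon} \sqrt{n}\,(\log n)\, n^{\epsilon} \ll_{\epsilon} n^{1/2 + \epsilon}$, after absorbing the logarithm into $n^{\epsilon}$ and readjusting $\epsilon$.

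The only genuinely delicate step is the divisibility reduction in the second paragraph; once the condition is recast as $xyz+1 \mid M$, the remaining work is the divisor bound and a routine lattice-point count, both of which parallel Theorem \ref{thm:repbound} exactly. I expect the coprimality manipulation and the verification that distinct $z$ give distinct divisors of $M$ to be where care is needed. As in the three-variable case, an analogue of Conjecture \ref{conj:sum} applied to the two-variable sum $\sum_{xy \le \sqrt{n}} \tau\bigl(xy(n-x-y)+1\bigr)$ would presumably sharpen the bound in Theorem \ref{thm:pwbound} to something like $\sqrt{n}\,\log^2 n$.
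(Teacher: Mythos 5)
Your proposal is correct and takes essentially the same approach as the paper: your $M = xy(n-x-y)+1$ is exactly the paper's $nxy + 1 - x^2y - xy^2$, which the paper obtains directly from the identity $(xyz+1)(xyw+1) = nxy + 1 - x^2y - xy^2$ rather than via your congruence argument modulo $xyz+1$, but the content is the same. Both arguments then fix the pair $(x,y)$ with $xy \le \sqrt{n}$ and bound the number of admissible $z$ by the number of divisors $d \equiv 1 \pmod{xy}$ of $M$, which is $\ll_{\epsilon} n^{\epsilon}$, yielding $R_4(n) \ll_{\epsilon} n^{\frac{1}{2}+\epsilon}$.
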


By the large sieve inequality we can show the following.
\begin{theorem}
\label{thm:r4thm}
For any $C > 0$ we have
$$U_4(N)\ll \frac{N}{e^{C\sqrt{\log N}}}$$
as $N$ goes to infinity.
\end{theorem}
Unlike the situation for $R_3(n)$, we believe that only a finite number of $n$ have $R_4(n)=0$.
\begin{conjecture}
\label{conj:five}
If $n > 45752$, then
$$R_4(n) > 0.$$
\end{conjecture}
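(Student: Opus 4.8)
The plan is to build up a representation of $n$ by fixing some of the variables and solving for the rest. Write $f_4(x,y,z,w)=xyzw+x+y+z+w$. Fixing $(x,y)=(a,b)$ and clearing denominators, one finds the identity
$$(abz+1)(abw+1)=ab\,(n-a-b)+1,$$
so that $n$ is represented with $x=a,\ y=b$ exactly when the right-hand side factors as a product of two integers, each $\ge ab+1$ and each $\equiv 1 \pmod{ab}$; the two factors then determine $z$ and $w$. More generally, fixing three variables $(x,y,z)=(a,b,c)$ shows that a representation exists iff $(abc+1)\mid(n-a-b-c)$ and $n\ge abc+a+b+c+1$, so for large $n$ the whole question is congruential: we must produce a small triple with $n\equiv a+b+c \pmod{abc+1}$.

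First I would dispose of the generic cases with the two families in which the modulus $ab$ equals $1$ or $2$, since only these impose the bare condition ``composite'' with no residue restriction. Taking $(a,b)=(1,1)$ gives $(z+1)(w+1)=n-1$, which succeeds whenever $n-1$ is composite; in particular this already covers every odd $n\ge5$ (as then $n-1$ is even and $\ge4$) and every even $n$ with $n-1$ composite. Equivalently, by Proposition \ref{prop:prime}, this family works whenever $R_3(n-1)>0$. Taking $(a,b)=(1,2)$ gives $(2z+1)(2w+1)=2n-5$, an odd number all of whose factors are automatically $\equiv 1\pmod 2$, so this succeeds whenever $2n-5$ is composite. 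After these two steps the only $n$ not yet represented are the even $n$ for which both $n-1$ and $2n-5$ are prime.

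To reach this remaining thin set I would bring in the families with $ab\ge3$ together with fully general triples $(a,b,c)$. Each such family removes those $n$ for which the corresponding linear form $ab(n-a-b)+1$ (respectively $n-a-b-c$ modulo $abc+1$) has a divisor in the required residue class and range. The aim is to accumulate enough of these conditions that the surviving set consists only of $n$ for which an entire list of linear forms in $n$ --- $n-1$, $2n-5$, $3n-11$, and so on --- simultaneously fails to have any usable divisor, i.e.\ is essentially prime. A sieve upper bound of the type behind the estimate $U_4(N)\ll N^{1/2}$ shows this set is sparse, and the heuristic that $n$ has on average $\tfrac16\log^3 n$ representations makes it overwhelmingly likely that the set is finite. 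An exhaustive computation up to a large bound would then identify $45752$ as its largest element.

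The decisive difficulty is this last reduction. Every individual family only eliminates those $n$ whose associated linear form is composite, so what remains after using any finite collection of families is controlled by the \emph{simultaneous} primality of several linear forms in $n$. There is no apparent finite covering system among the congruences $n\equiv a+b+c\pmod{abc+1}$, because the moduli one is allowed to use are capped by the size requirement $n\ge abc+a+b+c+1$; hence one cannot simply cover all residue classes and force a representation for every large $n$. Establishing unconditionally that no counterexample exceeds $45752$ would therefore require ruling out infinitely many potential prime constellations, which is beyond present methods --- and this is exactly why the statement is offered as a conjecture rather than a theorem.
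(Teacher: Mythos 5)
The statement you were asked about is offered in the paper only as a conjecture: the authors prove nothing beyond the identity $f_4(x,y,z,1)=f_3(x,y,z)+1$, which reduces verification for $45752 < n \le 5336500538$ to checking $R_4(p+1)>0$ for the $2014$ primes $p$ with $R_3(p)=0$, together with the factorization $(xyz+1)(xyw+1)=nxy+1-x^2y-xy^2$ and an exhaustive computation. Your proposal correctly recognizes this and reproduces essentially the paper's framework --- the same factorization identity, the same use of the $w=1$ family to reduce the surviving $n$ to simultaneous primality conditions on linear forms such as $n-1$ and $2n-5$, and the same conclusion that only computation and sieve/heuristic evidence, not an unconditional proof, is presently available --- so your assessment is in full agreement with the paper.
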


\subsection{Omega results}
If $n \geq 5$, by consideration of solutions where at least one of $x, y, z$ is equal to $1$, it is easy to see that
$$R_3(n)\ge 6\tau(n)-6$$
where $\tau(n) = \tau_2(n)$ is the number of divisors of $n$. Using what we know about how large $\tau(n)$ can get, it follows that for every $\epsilon > 0$,
\begin{equation}
\label{eqn:omega}
    R_3(n)=\Omega_{\epsilon}\left(\exp(\frac{\log (2-\epsilon) \log n}{\log \log n})\right).
\end{equation}
One could also consider a solution where at least one of $x, y, z$ is equal to $2$, in which case we can see
$$R_3(n) \ge 6\tau(n) + 6\tau(2n-3) + O(1).$$
It is not clear to us how big $\tau(n) + \tau(2n-3)$ can be. One could do similarly for $3, 4, \ldots$ and obtain apparently stronger omega results. In particular, let $$\tau(n; q, a) = \sum_{d|n \atop d \equiv a \bmod q} 1.$$
By (\ref{eqn:factor}) (below), we see that
$$R_3(n) \geq \sum_{1 \leq x \leq n^{\frac{1}{3}}} \max(\tau(xn-(x^2-1);x, 1)-2, 0)$$
Note Proposition \ref{prop:prime} is an immediate corollary of this.

We wonder whether
$$R_3(n) = \Omega_{\epsilon}\left(\exp(\frac{\log (3-\epsilon) \log n}{\log \log n})\right),$$
which would be the case if $R_3(n)$ behaves like $\tau_3(n)$.

\subsection{Algorithm for calculating $R_3(n)$}
\label{algorithm}
In order to find all the $n\le 5\times 10^9$ with $R_3(n)=0$,
we need a reasonable algorithm for determining when $R_3(n)>0$. Suppose that $xyz + x + y + z = n$ and that $x \leq y \leq z$. Multiplying through by $x$ and adding $1$ to both sides yields
\begin{equation}
    \label{eqn:factor}
    (xy+1)(xz+1) = nx - x^2 + 1.
\end{equation}
Therefore, if we can find the proper divisors $d$ of $nx-x^2+1$ such that $d \equiv 1\pmod {x}$, then from this set of divisors we can easily determine the complete set of solutions of $f_3(x, y, z) = n$. If there are no proper divisors $d\equiv 1 \bmod x$, then $R_3(n) = 0$. The probabilistic time complexity for prime factorization of a number $m$ is $O(\exp((\log m)^{\frac{1}{2}+\epsilon}))$. We apply this to $m = nx - x^2 + 1$, for all $x \leq n^{\frac{1}{3}}$ and so for each $n$, there are $n^{\frac{1}{3}+\epsilon}$ steps to determine the value of $R_3(n)$.

For the primes $p$ up to $5336500537$, we checked using this method whether or not $R_3(p) = 0$. We computed all $2014$ primes up to $5336500537$ such that $R_3(p)=0$:
\begin{eqnarray*}&&
2, 3, 5, 7, 11, 13, 17, 23, 31, 37, 41, 43, 53, 67, 71, 83, 97, 101, 107, 113,\\
&& \qquad \dots, 5178563387, 5220047297, 5284333573, 5322410117
\end{eqnarray*}
See \href{https://aimath.org/~conrey/data/not_3_representable.txt}{this link} for the full list.

\subsection{Algorithm for calculating $R_4(n)$}

Let $$f_4(x,y,z,w) = xyzw + x + y + z + w.$$
Suppose $f_4(x,y,z,w) = n$ and $x \leq y \leq z \leq w$. Multiplying through by $xy$ and rearranging yields
$$(xyz+1)(xyw+1) = nxy + 1 - x^2y - xy^2.$$
So, to find $R_4(n)$, it is a matter of finding proper divisors $d$ of $nxy + 1 - x^2y - xy^2$ such that $d \equiv 1 \pmod{xy}$. This involves $\ll n^{\frac{1}{2}} \log n$ factorizations of numbers that are less than $n^{\frac32}$. So, the total time to calculate $R_4(n)$ is $O(n^{\frac{1}{2}+\epsilon})$.

Since the number of divisors of $nxy+1-x^2y-xy^2$ is $O((nxy+1-x^2y-xy^2)^{\epsilon}) = O(n^{\epsilon})$, this argument also proves Theorem \ref{thm:pwbound}.

Next, note that $$f_4(x,y,z,1) = f_3(x,y,z) + 1.$$
Therefore, if $R_3(n) > 0$, then $R_4(n+1) > 0$. Thus, to verify that $R_4(n) > 0$ for $45752 < n \leq 5336500538$, it suffices to check that $R_4(p+1) > 0$ for the $2014$ values of $p \leq 5336500537$ for which $R_3(p) = 0$. For each of these, $R_4(p+1) > 0$.

We believe that the complete list of numbers $n$ for which $R_4(n) = 0$ is:
\begin{eqnarray*}
1, 2, 3, 4, 6, 8, 12, 14, 18, 32, 38, 44, 54,\\
68, 102, 108, 182, 192, 194, 224, 252, 374, 422,\\
432, 908, 1092, 1202, 1278, 2468, 2768,\\
3182, 4508, 7208, 16104, 21998, 26348, 45752
\end{eqnarray*}

\subsection{Residue Classes}

The expression $f_3(x, y, z) = xyz + x + y + z$ is also equal to $$z(xy+1) + x + y.$$
As a result, every positive integer $n \equiv x+y \pmod{xy+1}$ with $n> xy+1$ has $R_3(n) > 0$. For example, if $x = 2$ and $y = 2$, we see that every $n > 5$ such that $n \equiv 4 \pmod{5}$ has $R_3(n) > 0$. Similarly, if $n \equiv 5 \pmod{7}$, if $n \equiv 7 \pmod{11}$ or if $n \equiv 7 \text{ or } 8 \pmod{13}$, then $R_3(n) > 0$. In general, there are
\begin{equation}\label{eqn:fpdfn} f_3(p) = \frac{1}{2}(\tau(p-1) - 2)\footnote{$f_3(p)$ and $f_3(x,y,z)$ denote different functions. The same is true for $f_4(p)$ and $f_4(x, y, z, w).$}\end{equation}
residue classes modulo any odd prime $p$ such that if $n$ is congruent to one of these residue classes mod $p$, then $R_3(n) > 0$.

Consequently, to give a bound on the number of $n \leq N$ for which $R_3(n) = 0$, we can count how many $n \leq N$ are not in any of these residue classes. This idea suggests
using the large sieve to estimate $U_3(n)$, which we will do in the proof of Theorem \ref{thm:lsbound}.

Now, we do the same thing for $f_4(x, y, z, w) = xyzw + x + y + z + w$. Note that
$$xyzw + x + y + z + w = w(xyz + 1) + x + y + z.$$
Just as for $f_3(x,y,z)$, this shows every positive integer $n \equiv x + y + z \pmod{xyz + 1}$ with $n > xyz + 1$ has $R_4(n) > 0$. For example, if $x = 2$, $y = 2$, and $z = 3$, we see that every $n > 9$ such that $n \equiv 7 \pmod{13}$ has $R_4(n) > 0$. In general, there are
\begin{equation}\label{eqn:fp4dfn} f_4(p) = \frac{1}{6}(\tau_3(p-1) - 3)\end{equation}
residue classes modulo any odd prime $p$ such that if $n$ is congruent to one of these residue classes mod $p$, then $R_4(n) > 0$. We use this again in the proof of Theorem \ref{thm:r4thm}.

\section{Proofs}

\subsection{Proofs of Theorems 1 and 4}

\begin{proof}
Observe that we can split the sum into two parts:
$$\sum_{n\leq N}R_3(n) = \sum_{xyz+x+y+z \leq N} 1 = \sum_{xyz+x+y+z \leq \frac{N}{\log N}}1 + \sum_{\frac{N}{\log N} < xyz+x+y+z < N}1$$
For the second part,
$$\sum_{\frac{N}{\log N} < xyz+x+y+z < N}1 = \sum_{xy < N} \sum_{\frac{\frac{N}{\log N} - x - y}{xy+1} < z < \frac{N-x-y}{xy+1}}1$$
If $x+y+z \gg \frac{N}{\log N}$ and $xyz < N$, then $\max(x,y,z) \gg \frac{N}{\log N}$. Without loss of generality, let $z$ be the maximum of $x,y,z$. Then,
$$N \gg xyz \gg xy ~\frac{N}{\log N}.$$
Therefore,
$$x+y \ll xy \ll \log N.$$
Thus,
$$\sum_{xy < N} \sum_{\frac{\frac{N}{\log N} - x - y}{xy+1} < z < \frac{N-x-y}{xy+1}}1 \ll \sum_{\substack{xyz \leq 2N \\ xy \ll \log N}}1 \ll \sum_{xy \ll \log N} \frac{N}{xy} \ll N(\log \log N)^2$$
Therefore, the terms with $x+y+z \gg \frac{N}{\log N}$ are negligible, and we can assume that $x+y+z \ll \frac{N}{\log N}$. Then, we have
$$\sum_{xyz+x+y+z < N}1 = \sum_{xyz < N + O(\frac{N}{\log N})}1 = \sum_{n < N + O(\frac{N}{\log N})} \tau_3(n) \sim \frac{N\log^2 N}{2}$$
(see 12.1.4 of [T]), which proves Theorem 1.
\end{proof}
The proof of Theorem 4 is similar.

\subsection{Proof of Theorem 2}

\begin{proof}
If $n = xyz + x + y + z$, then $nx - x^2 + 1 = (xy+1)(xz + 1)$. Therefore, $$R_3(n) \leq 6\sum_{x \leq n^{\frac{1}{3}}} \tau(nx - x^2 + 1).$$
We now use the following estimate, which is a consequence of Henriot ([H1] and [H2]):
\begin{equation*}
\label{eqn:henriot}
    \sum_{x < n^{\frac13}} \tau(nx - x^2 + 1) \ll n^{\frac13} \log n\prod_{p|n^2+4} (1+4/p).
\end{equation*}
Therefore,
$$R_3(n) \ll n^{\frac13} \log n\prod_{p|n^2+4} (1+1/p)^4.$$
By the prime number theorem,
$$\prod_{p | n^2 + 4} (1 + 1/p) \leq \prod_{p \leq 3\log n} (1 + 1/p).$$
And by Merten's Theorem,
$$\prod_{p \leq 3\log n} (1 + 1/p) \ll \log \log n$$
and the result follows.
\end{proof}

\subsection{Proof of Theorem 3}
\begin{proof}
Define
\begin{equation*}
    a_n = \left\{
    \begin{array}{cc}
         0 & \mbox{if $R_3(n) > 0$} \\
         1 & \mbox{if $R_3(n) = 0$}
    \end{array}
    \right.
\end{equation*}
and define $Z = \sum\limits_{n = 1}^{N} a_n$. Note that $Z = U_3(N)$. By the discussion in section 1.4 and the large sieve inequality (see Theorem 7.11 of [O]), we have that
\begin{equation}
\label{eqn:sieve}
    Z \leq \frac{(N^{\frac{1}{2}} + X)^2}{Q}
\end{equation}
where $X$ is a free parameter and
$$Q = \sum_{q \leq X} \mu^2(q) \prod_{p | q} \frac{f_3(p)}{p - f_3(p)},$$
where $\mu$ denotes the M\"{o}bius function and $f_3(p)$ is defined as in (\ref{eqn:fpdfn}). We now prove the following lemma:
 \begin{lemma}
 \label{lem:q}
 Let $$f_3(p)=\frac{\tau(p-1)-2}{2}$$
 for a prime $p>3$.
 Then, there exists a $c>0$ such that as $X\to \infty$, we have
 \begin{eqnarray*}
Q :=\sum_{q\le X\atop
(q,6)=1}  \mu^2(q) \prod_{p\mid q}\frac{f_3(p)}{p-f_3(p)} \gg \exp\bigg(c\sqrt{\log X}\bigg).
 \end{eqnarray*}
 \end{lemma}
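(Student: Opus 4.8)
The plan is to bound $Q$ from below by retaining only squarefree $q\le X$, coprime to $6$, whose prime factorization has a carefully chosen shape, and then to optimize that shape. Write $g(p):=f(p)/(p-f(p))$ for the local factor. First I would discard the denominator, using $0\le f(p)<p$ to get the clean lower bound
$$g(p)\ \ge\ \frac{f(p)}{p}\ =\ \frac{\tau(p-1)-2}{2p}\ =:\ \widetilde g(p)\ \ge\ 0 .$$
Fix an integer $k$ (to be chosen of size $\asymp\sqrt{\log X}$ at the end) and set $y=X^{1/k}$. If $q$ is a product of $k$ distinct primes each lying in $(w,y]$ for a fixed constant $w>3$, then $q\le y^{k}=X$ and $(q,6)=1$ automatically, so every such $q$ is admissible. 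Passing from unordered to ordered tuples gives
$$Q\ \ge\ \sum_{\substack{q\le X,\ (q,6)=1\\ \omega(q)=k,\ p\mid q\Rightarrow w<p\le y}}\ \prod_{p\mid q}\widetilde g(p)\ \ge\ \frac{1}{k!}\Big(S^{k}-\binom{k}{2}\,S_2\,S^{k-2}\Big),$$
where $S:=\sum_{w<p\le y}\widetilde g(p)$, $S_2:=\sum_{w<p\le y}\widetilde g(p)^2$, and the subtracted term bounds those tuples with a repeated prime.

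The analytic heart of the matter is a lower bound for $S$ of the right order. Since $\sum_{p}\widetilde g(p)^{2}$ converges, taking $w$ large makes $S_2$ arbitrarily small, so the squarefree tuples account for a positive proportion of $S^{k}$ provided $S\gg k$. To estimate $S$ I would invoke the Titchmarsh divisor problem $\sum_{p\le t}\tau(p-1)\sim \frac{\zeta(2)\zeta(3)}{\zeta(6)}\,t$, which by partial summation gives $\sum_{p\le y}\frac{\tau(p-1)}{p}\sim C\log y$ with $C=\zeta(2)\zeta(3)/\zeta(6)$; subtracting the negligible $\sum_{p\le y}1/p=O(\log\log y)$ then yields
$$S\ \ge\ \Big(\tfrac{C}{2}-o(1)\Big)\log y\ =\ \Big(\tfrac{C}{2}-o(1)\Big)\frac{\log X}{k}.$$
I expect this to be the main obstacle. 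Fortunately only a one-sided bound with a large enough constant is needed: the inequality $\sum_{p\le y}\tfrac{\tau(p-1)}{p}\ge (C-o(1))\log y$ follows unconditionally from the Bombieri--Vinogradov theorem via the divisor-switching identity $\tau(m)=2\,\#\{d\mid m:\ d<\sqrt m\}+O(1)$ applied to $m=p-1$ and truncated at $d\le y^{1/2-\varepsilon}$, so no unproved hypothesis is required.

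Finally I would substitute and optimize in $k$. Writing $\beta:=C/2$ and using Stirling's formula $k!\ll k^{k+1/2}e^{-k}$, the bound becomes
$$Q\ \gg\ \frac{1}{\sqrt k}\Big(\frac{e\beta\log X}{k^{2}}\Big)^{k}.$$
Choosing $k=\big\lceil\sqrt{\beta\log X/e}\,\big\rceil\asymp\sqrt{\log X}$ makes the bracketed argument equal to $e^{2}$, so the logarithm of the main factor is $2k=\sqrt{(2C/e)\log X}$. Since $C=\zeta(2)\zeta(3)/\zeta(6)\approx 1.94>e/2$, the constant $\sqrt{2C/e}\approx 1.19$ exceeds $1$; the polynomial prefactor $k^{-1/2}$ is absorbed harmlessly, and we obtain $Q\gg\exp\!\big(\sqrt{\log X}\big)$, in fact with room to spare. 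The delicate point is precisely that the threshold for success is $C>e/2$: a cruder single-scale estimate that fixes all primes in one short interval loses a factor of $2$ in the exponent and falls just short, which is why it is essential to localize the primes at the scale $y=X^{1/k}$ with $k\asymp\sqrt{\log X}$.
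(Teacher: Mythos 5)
Your proof is correct, and its skeleton is the same as the paper's: drop the denominator via $f(p)/(p-f(p))\ge f(p)/p$, restrict to squarefree $q$ with exactly $k$ prime factors, feed in an average of $\tau(p-1)/p$ coming from the Titchmarsh divisor problem, divide by $k!$, apply Stirling, and optimize at $k\asymp\sqrt{\log X}$. The differences are in execution, and they are not cosmetic. The paper bounds $f(p)\ge\tau(p-1)/6$, invokes Halberstam--Richert for $\sum_{3<p\le X}\tau(p-1)/p\ge c\log X$ with an unspecified $c>0$, and then asserts the intermediate bound $Q\ge \log^k X/(c_1^k\,k!)$; but localizing the $k$ primes to $p\le X^{1/k}$, which the constraint $p_1\cdots p_k\le X$ forces in any direct implementation, actually yields $\bigl(\beta\log X/k\bigr)^k/k!$ with $\beta$ the constant in the prime sum --- an extra factor $k^{-k}$ that the paper's display omits. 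Once that factor is restored, the method gives $Q\gg\exp\bigl(\theta\sqrt{\log X}\bigr)$ with best exponent $\theta=2\sqrt{\beta/e}$, so the stated bound with exponent $1$ is reached only if $\beta>e/4$ --- precisely the threshold you isolate. Your version clears it because you keep $\widetilde g(p)=(\tau(p-1)-2)/(2p)$ intact, so $\beta=C/2$ with $C=\zeta(2)\zeta(3)/\zeta(6)\approx 1.94$ and $\theta=\sqrt{2C/e}\approx 1.196>1$, and because you justify the one-sided Titchmarsh bound with the true constant unconditionally via Bombieri--Vinogradov and divisor switching; by contrast, the paper's reduction $f(p)\ge\tau(p-1)/6$ caps $\beta$ at $C/6<e/4$, which (after correcting the $k^{-k}$) falls short of the lemma as literally stated, though any positive exponent would still suffice for the application to Theorem 4. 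You also handle the repeated-prime correction explicitly, via $S^k-\binom{k}{2}S_2S^{k-2}$ and the convergence of $\sum_p\widetilde g(p)^2$ with $w$ large, a point the paper passes over with the phrase ``restricting only to squarefree integers.'' In short: same route, but your constant-tracking repairs a genuine quantitative gap in the paper's own write-up, and your closing observation that success hinges on $C>e/2$ is exactly the issue the paper's sketch obscures.
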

 \begin{proof}
 First of all
 $$Q\ge \sum_{q\le X}  \mu^2(q) \prod_{p\mid q}\frac{f_3(p)}{p}.$$
 Let $g_3(p)=\tau(p-1)$. Then for $p>3$ we have $f_3(p)\ge \frac{g_3(p)}{6}$.
 Therefore, for any $k$, by restricting only to squarefree integers $q$ with $k$ distinct prime factors, we have
 $$Q\ge 6^{-k}\prod_{p_1\cdot p_2 \dots \cdot p_k\le X\atop
 3<p_1<p_2<\dots < p_k} \frac{g_3(p_1)\dots g_3(p_k)}{p_1\dots p_k}.$$
 Now the Titchmarsh divisor problem (as found in Theorem 3.9 of [HR]) asserts that  $$\sum_{3<p\le X} \frac{\tau(p-1)}{p} \sim C  \log X$$
 which implies that for any $0<a<b$,
  $$\sum_{X^a<p\le X^b} \frac{\tau(p-1)}{p} \sim C  (b-a)\log X.$$
  Therefore,
 \begin{eqnarray*}
   Q\ge   6^{-k}\sum_{3<p_1< X^{\frac1{k^2}}}\sum_{X^{\frac1{k^2}}<p_2< X^{\frac2{k^2}}}\dots \sum_{X^{\frac{k-1}{k^2}}<p_k< X^{\frac k{k^2}}}\frac{g_3(p_1)\dots g_3(p_k)}{p_1\dots p_k}\gg (\frac{C}{6k^2}\log X)^k
   \end{eqnarray*} for $X\ge 10$ and any $k<\sqrt{\frac{\log X}{\log 3}}$.
  Now we choose $$ k =b\sqrt{\log X}$$ with a sufficiently small $b$ and have
 $$Q\gg   \exp\left(\sqrt{\log X} \left( b\log \frac{C}{6b^2}\right)\right) \gg \exp(c\sqrt{\log X})$$
 for some small $c>0$
 as claimed.
 \end{proof}

Using the result of Lemma \ref{lem:q} in (\ref{eqn:sieve}) and choosing $X = N^{\frac{1}{2}}$, we obtain the bound
$$U_3(N) \ll \frac{N}{e^{c\sqrt{\log N}}}.$$
\end{proof}

\subsection{Proof of Theorem 6}

The proof is similar to the proof of Theorem \ref{thm:lsbound} except that we use
$$\sum_{3<p\le X} \frac{\tau_3(p-1)}{p} \sim C'  \log^2 X$$
and $f_4(p)$ as defined in (\ref{eqn:fp4dfn}). We leave the details to the reader.

\section{Conclusion and Open Questions}

Our proofs for bounds on $U_3(N)$ and $U_4(N)$ are sieve problems with two different numbers of residue classes. $U_3(N)$ has $c\log p$ residue classes per prime $p$ on average that get sieved out and $U_4(N)$ has $c\log^2 p$ residue classes per prime $p$ on average that get sieved out. It is interesting to note that $U_3(N)$ appears to go to infinity and $U_4(N)$ appears to be bounded.

Another problem which bears similarity is expressing $n$ as the sum of three positive integer cubes. Similar to the proof of Theorem \ref{thm:repbound}, one can obtain $r_3(n) \ll n^{\frac13}\log n (\log \log n)^a$ for some $a$. By Hooley's paucity results, one can obtain $r_3(n) \ll n^{\frac13}$. Hardy and Littlewood's [HL] Hypothesis K was that $r_3(n) \ll n^{\epsilon}$. However, in 1936, Mahler found a parametric family of solutions for $n$'s which are perfect twelfth powers and proved that $r_3(n) = \Omega(n^{\frac{1}{12}})$.

Here are some related problems we think are worth exploring further:

\begin{enumerate}
    \item Prove Conjecture \ref{con:r3bound}, that $R_3(n)$ grows slower than $n^{\epsilon}$ for any $\epsilon > 0$.
    \item Prove Conjecture \ref{conj:five}.
    \item Prove that there are infinitely many positive integers $n$ for which $R_3(n) = 0$.
    \item Does $R_4(n)$ go to $\infty$ with $n$?
    \item Give any improvement on the omega result in (\ref{eqn:omega}).
    \item Give omega results for $\tau(n) + \tau(n+1)$ and $\tau(n^2 + 1)$.
\end{enumerate}
We hope that this work stimulates the reader to pursue some of these questions further.

\section{Acknowledgements}

After posting the first version of this paper to the arXiv, we had a number of email exchanges. We would like to thank Olivier Bordelles, James Maynard, Bill McEachen, Tom\'{a}s Oliviera e Silva, Gerald Tenenbaum, and Trevor Wooley for their helpful comments. We would also like to thank Michel Marcus for adding sequence A350535 to the OEIS.

\end{document}